\newtheorem{thm}{Theorem}
\newtheorem{lem}[thm]{Lemma}
\theoremstyle{definition}
\newtheorem{dfn}[thm]{Definition}
\newtheorem{ntn}[thm]{Notation}
\theoremstyle{remark}
\newtheorem{exm}[thm]{Example}
\newtheorem{rmk}[thm]{Remark}
\numberwithin{equation}{section}
\newcommand{\Aut}{\operatorname{Aut}}
\newcommand{\lsp}{\operatorname{span}} \newcommand{\clsp}{\overline{\lsp}}
\newcommand{\Ext}{\operatorname{Ext}}
\newcommand{\CC}{\mathbb{C}}
\newcommand{\NN}{\mathbb{N}}
\newcommand{\RR}{\mathbb{R}}
\newcommand{\TT}{\mathbb{T}}
\newcommand{\ZZ}{\mathbb{Z}}
\newcommand{\Bb}{\mathcal{B}}
\newcommand{\Kk}{\mathcal{K}}
\newcommand{\Tt}{\mathcal{T}}
\def\inv{^{-1}}
\title[Reconstructing graphs from Toeplitz algebras]{Reconstructing directed graphs from generalised gauge actions on their Toeplitz algebras}
\author{Nathan Brownlowe}
\address[Nathan Brownlowe]{School of Mathematics and Statistics, The University of Sydney, NSW 2006, Australia}
\email{nathan.brownlowe@sydney.edu.au}
\author{Marcelo Laca}
\address[Marcelo Laca]{Department of Mathematics and Statistics, University of Victoria, Victoria, BC V8W 3P4, Canada}
\email{laca@math.uvic.ca}
\author{Dave Robertson}
\address[Dave Robertson]{School of Mathematical and Physical Sciences, University of Newcastle, University Drive,
Callaghan 2308, Australia}
\email{dave84robertson@gmail.com}
\author{Aidan Sims}
\address[Aidan Sims]{School of Mathematics and Applied Statistics, University of Wollongong, NSW 2522, Australia}
\email{asims@uow.edu.au}
\date{\today}
\subjclass{46L05}
\keywords{Graph $C^*$-algebra; Toeplitz algebra; KMS state}
\thanks{This research was supported by Australian Research Council grant DP180100595.
We are grateful to S{\o}ren Eilers for very helpful conversations that led to Example~\ref{ex:not iso}.}
\begin{document}

\begin{abstract}
We show how to reconstruct a finite directed graph $E$ from its Toeplitz algebra, its gauge
action, and the canonical finite-dimensional abelian subalgebra generated by the vertex
projections. We also show that if $E$ has no sinks, then we can recover $E$ from its Toeplitz
algebra and the generalised gauge action that has, for each vertex, an independent copy of the
circle acting on the generators corresponding to edges emanating from that vertex. We show by
example that it is not possible to recover $E$ from its Toeplitz algebra and gauge action alone.
\end{abstract}

\maketitle

\section{Introduction}

In recent years, there has been an enormous amount of work, led by Eilers and his collaborators
({see,} for example, \cite{EilersEtAl:OAD2013, EilersEtAl:xx16, EilersEtAl:MA2017, EilersEtAl:CJM2018,
EilersEtAl:MJM2012, EilersTomforde:MA10, Sorensen:ETDS2013}) on determining which moves on finite
directed graphs generate the equivalence relations determined by various types of isomorphism of
the associated $C^*$-algebras. One spectacular example of this is
\cite[Theorem~3.1]{EilersEtAl:xx16}: if $E$ and $F$ are graphs with finitely many vertices, then
the graph $C^*$-algebras $C^*(E)$ and $C^*(F)$ are stably isomorphic if and only if $E$ can be
transformed into $F$ using a finite sequence of in-splittings, out-splittings, reductions,
additions of sinks, Cuntz splices, Pulelehua moves, and the inverses of these moves.

By contrast, relatively little attention has been paid to the Toeplitz algebras of directed
graphs, until the recent interest in KMS-theory (see, for example, \cite{CarlsenLarsen:JFA2016,
aHLRS3, KajiwaraWatatani:KJM2013, LacaNeshveyev:JFA04, Thomsen:AM2017}) brought them to the fore.
It has been known for some time \cite{KatsoulisKribs:MA04, Solel:JAMS04} that the
\emph{non-selfadjoint} Toeplitz algebra (also called the tensor algebra or the quiver operator
algebra) of a directed graph $E$ contains all of the information about $E$---if $E$ and $F$ are
directed graphs with isomorphic tensor algebras, then they are themselves isomorphic. But there
are no results in this direction for the Toeplitz $C^*$-algebras of directed graphs.

Here we consider the extent to which a finite directed graph can be recovered from its Toeplitz
algebra and gauge action. We show that at least one additional piece of information is needed (see
Examples \ref{ex:not iso}~and~\ref{eg:sinks}) and identify two pieces of information, either of
which suffices for finite graphs with no sinks. Our key tool is the KMS structure of $\Tt C^*(E)$
for the dynamics arising from its gauge action; we show that using this we can recover the
rank-one projections in $\Tt C^*(E)$ that correspond to the vertices of $E$. From this, using the
spectral subspaces of the gauge action, it is straightforward to count the number of edges
(indeed, the number of paths of length $n$ for any $n$) emanating from a given vertex. However,
additional information is required to determine which of these paths have the same ranges. We show
that the subalgebra $M_E = \lsp\{q_v : v \in E^0\} \subseteq \Tt C^*(E)$ generated by the vertex
projections is sufficient to recover this information, and that  if $E$ has no sinks then the
action $\kappa^E$ of the torus $\TT^{E^0}$ such that $\kappa^{{E}}_z(t_e) = z_{s(e)} t_e$ for each
$e \in E^1$ also suffices.

\smallskip

\textbf{Conventions.} We use the conventions of \cite{Raeburn} for graphs and their
$C^*$-algebras; so the Toeplitz algebra of a directed graph $E$ is the universal $C^*$-algebra
generated by projections $\{q_v : v \in E^0\}$ and partial isometries $\{t_e : e \in E^1\}$ such
that $t_e^* t_e = q_{s(e)}$ and $q_v \ge \sum_{r(e) = v} t_e t^*_e$. We use the notational
convention in which, for example, $vE^1 = r^{-1}(v)$ and $E^1 v = s^{-1}(v)$.

\section{An example}

We started this project by asking whether it is possible to recover a directed graph $E$ from its
Toeplitz algebra and gauge action. The following example shows that the answer is no, even for the
particularly well-behaved class of strongly connected finite graphs in which every cycle has an
entrance. We thank S{\o}ren Eilers for very helpful conversations that led to the construction of
this example. For a simpler example involving graphs that are not strongly connected, and have
sinks and sources, see Example~\ref{eg:sinks}.

\begin{exm}\label{ex:not iso}
Consider the following directed graphs $E$ and $F$, that differ only in the range of the edge $e$:
\[
\tikzset{->-/.default=0.5, ->-/.style={decoration={
  markings,
  mark=at position #1 with {\arrow{stealth}}},postaction={decorate}}}
\vbox to 5cm{\vskip-1.6cm\begin{tikzpicture}
    \node at (-2.2, 2) {$E$};
    \node[circle, inner sep=1pt, fill=black] (a) at (0,0) {};
    \node[circle, inner sep=0.2pt] (b) at (-1,2) {$u$};
    \node[circle, inner sep=0.2pt] (c) at (1,2) {$v$};
    \node[circle, inner sep=1pt, fill=black] (d) at (-2,4) {};
    \node[circle, inner sep=1pt, fill=black] (e) at (0,4) {};
    \node[circle, inner sep=1pt, fill=black] (f) at (2,4) {};
    \draw[->-] (b)--(a) node [pos=0.5, anchor=south west, inner sep=1pt] {$f$};
    \draw[->-] (c)--(a) node [pos=0.5, anchor=south east, inner sep=1pt] {$g$};
    \draw[->-] (d)--(b);
    \draw[->-] (e) to (b);
    \draw[->-, in=33, out= 273] (e) to node [pos=0.5, anchor=north west, inner sep=1pt] {$e$} (b);
    \draw[->-] (f)--(c);
    \draw[->-, in=135, out=135] (a) to (d);
    \draw[->-] (a)  .. controls +(4, 1) and +(4,2.5) .. (e);
    \draw[->-, in=45, out=45] (a) to (f);
\end{tikzpicture}
\begin{tikzpicture}
    \node at (-2.2, 2) {$F$};
    \node[circle, inner sep=1pt, fill=black] (a) at (0,0) {};
    \node[circle, inner sep=0.2pt] (b) at (-1,2) {$u$};
    \node[circle, inner sep=0.2pt] (c) at (1,2) {$v$};
    \node[circle, inner sep=1pt, fill=black] (d) at (-2,4) {};
    \node[circle, inner sep=1pt, fill=black] (e) at (0,4) {};
    \node[circle, inner sep=1pt, fill=black] (f) at (2,4) {};
    \draw[->-] (b)--(a) node [pos=0.5, anchor=south west, inner sep=1pt] {$f$};
    \draw[->-] (c)--(a) node [pos=0.5, anchor=south east, inner sep=1pt] {$g$};
    \draw[->-] (d)--(b);
    \draw[->-] (e) to (b);
    \draw[->-] (e) to node [pos=0.5, anchor=north east, inner sep=1pt] {$e$} (c);
    \draw[->-] (f)--(c);
    \draw[->-, in=135, out=135] (a) to (d);
    \draw[->-] (a)  .. controls +(4, 1) and +(4,2.5) .. (e);
    \draw[->-, in=45, out=45] (a) to (f);
\end{tikzpicture}}
\]
Let $(t, q)$ be the universal generating Toeplitz--Cuntz--Krieger $F$-family in $\Tt
C^*(F)$. Define elements $\{Q_w : w \in E^0\}$ and $\{T_h : h \in E^1\}$ in $\Tt C^*(F)$ as follows:
\begin{gather*}
Q_u = q_u + t_e t^*_e,\quad Q_v = q_v - t_et_e^*,\quad
T_f = t_f + t_g t_e t^*_e,\quad T_g = t_g (q_v - t_et_e^*),\quad\text{and}\\
Q_w = q_w \text{ for $w \in E^0 \setminus \{u,v\}$}\quad\text{ and }\quad T_h = t_h
\text{ for $h \in E^1 \setminus \{f, g\}$.}
\end{gather*}
It is routine to check that $(Q, T)$ is a Toeplitz--Cuntz--Krieger $E$-family that generates $\Tt
C^*(F)$, and that the elements $Q_w - \sum_{h \in wE^1} T_h T^*_h$ are all nonzero. So the
universal property of $\Tt C^*(E)$ yields a surjective homomorphism $\pi_{Q,T} : \Tt C^*(E) \to
\Tt C^*(F)$ such that $\pi_{Q,T}(q_w) = Q_w$ and $\pi_{Q,T}(t_h) = T_h$, and
\cite[Theorem~4.1]{FR} implies that $\pi_{Q,T}$ is injective. It is immediate from the definitions
of the $T_h$ and $Q_w$ that $\pi_{Q,T}$ is gauge-equivariant. So $(\Tt C^*(E), \gamma^E) \cong
(\Tt C^*(F), \gamma^F)$, but there is no graph-isomorphism from $E$ to $F$ because, for example,
$E$ has a pair of parallel edges, whereas $F$ does not.

In fact, since the canonical diagonals $D_E = \clsp\{t_\mu t^*_\mu : \mu \in E^*\}$ and $D_F =
\clsp\{t_\mu t^*_\mu : \mu \in F^*\}$ are maximal abelian in $\Tt C^*(E)$ and $\Tt
C^*(F)$, %and since $\pi_{Q,T}(D_E) \subseteq D_F$
we see that $\pi_{Q,T}(D_E)$ is a maximal abelian subalgebra of $\Tt C^*(F)$.  Since this maximal
abelian subalgebra is contained in the maximal abelian subalgebra $D_F$ of $\Tt C^*(F)$, we deduce
that $\pi_{Q,T}(D_E) = D_F$. So the triples $(\Tt C^*(E), \gamma^E, D_E)$ and $(\Tt C^*(F),
\gamma^F, D_F)$ are isomorphic even though $E$ and $F$ are not.
\end{exm}

\section{The main theorem}

Example~\ref{ex:not iso} shows that recovering a directed graph from its Toeplitz algebra requires
more information than just the gauge action. Our main result identifies two additional bits of
data, either one of which bridges the gap. The first one is the C*-subalgebra generated by the
vertex projections inside the Toeplitz algebra. The second one is a higher dimensional
generalisation of the gauge action.

\begin{dfn}\label{dfn:kappa}
When $E$ is a directed graph, the {\em generalised gauge action} on $\Tt C^*(E)$ is the action
$\kappa^E : \TT^{E^0} \to \Aut \Tt C^*(E)$  determined by $\kappa^E_z(t_e) = z_{s(e)} t_e$ for all
$e \in E^1$ and $z\in \TT^{E^0}$. When $E$ and $F$ are two directed graphs, we say that an
isomorphism $\rho : \Tt C^*(E) \to \Tt C^*(F)$ {\em intertwines the generalised gauge actions}
$\kappa^E$ and $\kappa^F$ if there is a bijection $\varphi : E^0 \to F^0$ such that the induced
homomorphism $\varphi^*: \TT^{E^0} \to \TT^{F^0}$ satisfies  $\rho \circ \kappa^E_z =
\kappa^F_{\varphi^*(z)} \circ \rho$ for all $z \in \TT^{E^0}$.
\end{dfn}

\begin{thm}\label{thm:main}
Let $E$ and $F$ be finite directed graphs. As before, let $\gamma^E$ be the gauge action of $\TT$
on $\Tt C^*(E)$, and let $M_E := \lsp\{q_v : v \in E^0\} \subseteq \Tt C^*(E)$. Let $\kappa^E $
be the generalised gauge action of $ \TT^{E^0}$ on $ \Tt C^*(E)$ given by $\kappa^E_z(t_e) =
z_{s(e)} t_e$ for all $e \in E^1$. Denote by $\gamma^F$, $M_F$, and $\kappa^{{F}}$  the
corresponding concepts for  $\Tt C^*(F)$. \begin{enumerate}
    \item\label{it:M} There is an isomorphism $\Tt C^*(E) \cong \Tt C^*(F)$ that intertwines
        $\gamma^E$ and $\gamma^F$ and carries $M_E$ to $M_F$ if and only if $E \cong F$.
    \item\label{it:kappa} Suppose that $E$ and $F$ have no sinks. Then there is an isomorphism
        $\Tt C^*(E) \cong \Tt C^*(F)$ that intertwines the generalised gauge actions $\kappa^E$ and
        $\kappa^F$ if and only if $E \cong F$.
\end{enumerate}
\end{thm}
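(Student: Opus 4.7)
The ``if'' directions we dispatch immediately: a graph isomorphism $\phi : E \to F$ induces, by the universal property of the Toeplitz algebra, an isomorphism $\phi_* : \Tt C^*(E) \to \Tt C^*(F)$ with $\phi_*(q_v) = q_{\phi(v)}$ and $\phi_*(t_e) = t_{\phi(e)}$; this carries $M_E$ onto $M_F$ and intertwines both $\gamma$ and $\kappa$ (the latter via the induced bijection $\phi|_{E^0}$). The substance is in the converses.

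The plan for the converse in both parts begins with a common preliminary: recover the ``vacuum'' projections $p_v := q_v - \sum_{e \in vE^1} t_e t_e^*$ intrinsically from $(\Tt C^*(E), \gamma^E)$. For $\beta$ above the critical value, the KMS analysis of \cite{aHLRS3} identifies the extremal $\mathrm{KMS}_\beta$-states for $\gamma^E$ as a family $\{\phi_v\}_{v \in E^0}$ satisfying $\phi_v(q_w) = \sum_\mu e^{-\beta|\mu|}$, the sum ranging over finite paths $\mu$ with $s(\mu) = v$ and $r(\mu) = w$. A telescoping computation using the KMS condition then yields $\phi_v(p_w) = \delta_{v,w}$, which pins down $\{p_v : v \in E^0\}$ intrinsically. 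Any isomorphism $\rho : \Tt C^*(E) \to \Tt C^*(F)$ intertwining the gauge actions thus determines a bijection $\psi : E^0 \to F^0$ with $\rho(p_v) = p_{\psi(v)}$.

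For part~\eqref{it:M}, the minimal projections of $M_E$ are the $q_v$, so $\rho(M_E) = M_F$ produces a bijection $\varphi : E^0 \to F^0$ with $\rho(q_v) = q_{\varphi(v)}$; because each $p_v$ is the unique element of $\{p_u\}$ dominated by $q_v$ and both families are orthogonal, $\psi = \varphi$. A direct calculation in the Fock representation (where $\pi(p_v)$ is the rank-one projection onto $\CC e_v$) identifies $q_w A_1^{\gamma^E} p_v = \lsp\{t_e p_v : s(e) = v,\ r(e) = w\}$ and hence $|wE^1 v| = \dim(q_w A_1^{\gamma^E} p_v)$. Since $\rho$ preserves each of $q_w$, $p_v$ and $A_1^{\gamma^E}$, we obtain $|wE^1 v| = |\varphi(w) F^1 \varphi(v)|$ for all $v, w$, and choosing arbitrary bijections $wE^1 v \to \varphi(w) F^1 \varphi(v)$ assembles the graph isomorphism.

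For part~\eqref{it:kappa}, the intertwining hypothesis supplies $\varphi$ at the outset and also gives $\rho(A_{\delta_v}^{\kappa^E}) = A_{\delta_{\varphi(v)}}^{\kappa^F}$ for each $v \in E^0$ (and analogous statements for higher characters). A Fock-space calculation---in which the no-sinks hypothesis ensures that $A_{\delta_v}^{\kappa^E}$ contains a nonzero $t_e$ with $s(e) = v$---shows that $A_{\delta_v}^{\kappa^E} p_w \neq 0$ iff $v = w$, and transporting this dichotomy across $\rho$ forces $\psi = \varphi$. Lacking $M_E$ and hence direct access to the $q_w$, we instead count edges via paths of length two. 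Using that $t_\nu^* p_v = 0$ for $|\nu| \ge 1$ and that $\kappa^E$-characters record source multisets of paths, we expect
\[
\dim\bigl(A_{\delta_v+\delta_w}^{\kappa^E} p_v\bigr) = |wE^1 v| \cdot |E^1 w| \quad\text{and}\quad \dim\bigl(A_{\delta_w}^{\kappa^E} p_w\bigr) = |E^1 w|;
\]
the no-sinks hypothesis gives $|E^1 w| > 0$, so the quotient recovers $|wE^1 v|$, and $\rho$-invariance of the ingredients yields the graph isomorphism. The main obstacle we anticipate is the KMS step---identifying the extremal $\mathrm{KMS}_\beta$-states with the vertex states and verifying $\phi_v(p_w) = \delta_{v,w}$---since once $\{p_v\}$ is characterised intrinsically, the rest is careful bookkeeping in the Fock representation and source-multiset combinatorics.
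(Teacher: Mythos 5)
Your overall strategy coincides with the paper's: use the extremal KMS$_\beta$ states of the gauge dynamics at large $\beta$ to locate a distinguished minimal projection for each vertex, then count edges using spectral subspaces cut down by those projections. Your edge count in part~(\ref{it:kappa}) via the $\kappa^E$-spectral subspaces for the characters $\delta_v+\delta_w$ and $\delta_w$ is an equivalent, arguably cleaner, packaging of the paper's device, which instead fixes an auxiliary circle action $\gamma^{\phi,\psi}=\kappa^E_{\omega(\phi,\psi,\cdot)}$ (weight $\overline{z}$ at $v_\phi$, weight $z$ at $v_\psi$) and intersects its fixed-point algebra with $\Tt C^*(E)_2 p_\psi$; both computations produce $|E^1 v_\phi|\cdot|v_\phi E^1 v_\psi|$ and divide by $|E^1 v_\phi|=\dim \Tt C^*(E)_1 p_\phi$, using the no-sinks hypothesis to avoid dividing by zero.

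However, the step you yourself flag as the obstacle is genuinely incomplete, and the route you propose would not close it. You claim that the identity $\phi_v(p_w)=\delta_{v,w}$ ``pins down $\{p_v\}$ intrinsically.'' First, the value is off by a normalisation: the extremal state $\phi_v$ satisfies $\phi_v(p_v)=\big(\sum_{\mu\in E^*v}e^{-\beta|\mu|}\big)^{-1}$, not $1$. Much more seriously, the vanishing pattern $\phi_v(p)=0$ for $v\ne w$ and $\phi_w(p)>0$ does not determine $p_w$: since $\phi_w$ factors through the summand $\ell^2(E^*w)$ of the path-space representation, every minimal projection $\Theta_{\mu,\mu}=t_\mu p_w t_\mu^*$ with $\mu\in E^*w$ (indeed every $\theta_{\xi,\xi}$ for a unit vector $\xi\in\ell^2(E^*w)$ inside the canonical copy of $\Kk(\ell^2(E^*w))$) exhibits exactly the same vanishing pattern. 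So knowing the numbers $\phi_v(p_w)$ is not an intrinsic characterisation, and the bijection $\psi$ with $\rho(p_v)=p_{\psi(v)}$ on which both of your converse arguments rest is not yet available. What the paper actually proves (Lemma~\ref{lem:get vertices}) is that $\Delta_{v_\phi}$ is the \emph{unique minimal projection of $\Tt C^*(E)$ at which $\phi$ attains its maximum over all minimal projections}; establishing this requires writing an arbitrary minimal projection as $\theta_{\xi,\xi}$ and the estimate $\phi(\theta_{\xi,\xi})\le\big(|\xi_{v_\phi}|^2+e^{-\beta}(1-|\xi_{v_\phi}|^2)\big)\phi(\Delta_{v_\phi})$, which uses $\beta>0$ in addition to $\beta>\log\rho(A_E)$. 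You need some characterisation of this kind. A smaller point: to make the reconstruction an honest invariant of the triple, the paper also shows (Lemmas \ref{lem:critical beta}~and~\ref{cor:beta property}) that the critical value $\log\rho(A_E)$ is recoverable from the KMS data; for the theorem as stated you may instead simply choose $\beta$ supercritical for both $E$ and $F$. The remaining bookkeeping in your proposal is correct and matches the paper.
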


\begin{rmk}
In both parts of Theorem~\ref{thm:main}, the additional data required beyond the gauge actions
includes the number of vertices in the graphs. We point out, however, that this number is already
available as an isomorphism invariant of the $C^*$-algebra $\Tt C^*(E)$ alone: by
\cite[Theorem~4.1]{FR} combined with \cite[Theorem~4.4]{Pimsner}, the Toeplitz algebra $\Tt
C^*(E)$ is $KK$-equivalent to $\CC^{E^0}$, and in particular $K_0(\Tt C^*(E)) \cong \ZZ E^0$. So
if $\Tt C^*(E) \cong \Tt C^*(F)$, we already know that $|E^0| = |F^0|$.
\end{rmk}

The proof of the ``if" implication is easy in both cases.  If $\varphi^0 : E^0 \to F^0$ and
$\varphi^1 : E^1 \to F^1$ constitute an isomorphism of graphs, then the isomorphism $\rho : \Tt
C^*(E) \to \Tt C^*(F)$ given by $\rho(t_e) = t_{\varphi^1(e)}$ and $\rho(q_v) = q_{\varphi^0(v)}$
carries $M^E$ to $M^F$
 and intertwines $\kappa^E$ and $\kappa^F$ (and, by restriction, $\gamma^E$ and $\gamma^F$), via the isomorphism $\TT^{E^0} \cong \TT^{F^0}$ induced by
$\varphi^0$.

To prove the reverse implications we shall use the results of \cite{aHLRS, aHLRS3} on the KMS
structure of the Toeplitz algebra $\Tt C^*(E)$ for the dynamics $\alpha:\RR \to \Aut(\Tt C^*(E))$,
where $\alpha_t = \gamma^E_{e^{it}}$ is the lift of the gauge action; that is
\begin{equation}\label{eq:alpha def}
    \alpha_t(q_v) = q_v\quad\text{ and }\quad \alpha_t(t_e) = e^{it} t_e\quad\text{ for all }v \in E^0, e \in E^1,\text{ and }t \in \RR.
\end{equation}
We write
\[
\Ext_\beta(\alpha) := \{\phi : \phi\text{ is an extremal KMS$_\beta$ state of $(\Tt C^*(E), \alpha)$}\}.
\]

We first need to be able to recognise, using the data $(\Tt C^*(E), \alpha)$, when a real number
$\beta$ is strictly greater than the natural logarithm of the spectral radius of the adjacency
matrix $A_E$ of the directed graph $E$. For this, as in \cite{aHLRS3}, we write $\sim$ for the
equivalence relation on $E^0$ given by $v \sim w$ if both $v E^* w \not= \emptyset$ and $w E^* v
\not= \emptyset$. We call the equivalence classes $C \in E^0/{\sim}$ the \emph{strongly connected
components} of $E$. For $C \in E^0/{\sim}$, we write $A_C$ for the $C \times C$ submatrix of
$A_E$, which is the adjacency matrix of the subgraph of $E$ with vertices $C$ and edges $C E^1 C$.

\begin{lem}\label{lem:critical beta}
Let $E$ be a finite directed graph. If $\beta < \log\rho(A_E) < \beta'$, then
$|\Ext_\beta(\alpha)| < |\Ext_{\beta'}(\alpha)|$.
\end{lem}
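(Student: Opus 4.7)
My plan is to use the explicit parametrisation of the extremal KMS$_\beta$ states of $(\Tt C^*(E),\alpha)$ from \cite{aHLRS,aHLRS3} and count these states on either side of $\log\rho(A_E)$.

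On the super-critical side, when $\beta'>\log\rho(A_E)$ we have $e^{-\beta'}\rho(A_E)<1$, so for each vertex $v\in E^0$ the path sums $y^v_w(\beta') := \sum_{\mu\in wE^*v}e^{-\beta'|\mu|}$, $w\in E^0$, all converge. The KMS-parametrisation theorem of \cite{aHLRS} then assigns to each vertex $v$ a unique extremal KMS$_{\beta'}$ state (roughly, the normalisation of $y^v(\beta')$ extended from $M_E$ via the canonical conditional expectation onto the diagonal), giving a bijection $\Ext_{\beta'}(\alpha)\leftrightarrow E^0$, and hence $|\Ext_{\beta'}(\alpha)|=|E^0|$.

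On the sub-critical side, I would show $|\Ext_\beta(\alpha)|\leq|E^0|-1$. Since $E$ is finite and $\rho(A_E)=\max_{C\in E^0/\sim}\rho(A_C)$, at least one strongly connected component $C_0$ is super-critical at $\beta$, meaning $\rho(A_{C_0})>e^\beta$; write $V_{\mathrm{sup}}$ for the union of all super-critical components and $V_{\mathrm{crit}}$ for the union of the critical ones ($\log\rho(A_C)=\beta$). According to the parametrisation in \cite{aHLRS3}, every extremal KMS$_\beta$ state falls into one of two classes: (i) a state associated with a vertex $v\in E^0$ such that $\rho(A_C)<e^\beta$ for every strongly connected component $C$ reachable from $v$, or (ii) a single distinguished Perron-type state associated with each critical component. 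A vertex inside a super-critical component always reaches its own component, so it is excluded from (i); a super-critical component is by definition not critical, so it contributes nothing to (ii). Counting candidates for (i) and (ii) separately then gives
\[
|\Ext_\beta(\alpha)|\leq \bigl(|E^0|-|V_{\mathrm{sup}}|-|V_{\mathrm{crit}}|\bigr)+|V_{\mathrm{crit}}|=|E^0|-|V_{\mathrm{sup}}|\leq |E^0|-1.
\]

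Combining the two bounds gives $|\Ext_\beta(\alpha)|\leq|E^0|-1<|E^0|=|\Ext_{\beta'}(\alpha)|$, as required. The main obstacle I expect is pinning down and correctly applying the sub-critical parametrisation of \cite{aHLRS3}: one must verify that the two classes (i) and (ii) above are exhaustive, that each critical component contributes a single extremal state rather than one per vertex, and that the interaction between critical and super-critical components cannot manufacture additional extremal KMS$_\beta$ states that would inflate the count past $|E^0|-|V_{\mathrm{sup}}|$. Once the parametrisation is in hand, the counting itself is elementary.
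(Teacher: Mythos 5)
Your proposal is correct and follows essentially the same route as the paper: both halves of the inequality are read off from the same external results, namely \cite[Theorem~3.1]{aHLRS} for $|\Ext_{\beta'}(\alpha)|=|E^0|$ when $\beta'>\log\rho(A_E)$, and the sub-critical parametrisation in \cite[Theorem~5.3]{aHLRS3} for the upper bound on $|\Ext_{\beta}(\alpha)|$. The only cosmetic difference is that the paper excludes the whole saturation $H_\beta=\{s(\mu): \mu\in E^*,\ r(\mu)\text{ in a super-critical component}\}$ rather than just the vertices lying in super-critical components, but either excluded set is nonempty when $\beta<\log\rho(A_E)$, so both versions of the count give the strict inequality.
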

\begin{proof}
If $E$ has no cycles, then \cite[Lemma~A.1(b)]{aHLRS} shows that $\log \rho(A_E) = -\infty$, and
so the result is vacuous. So suppose that $E$ has at least one cycle. Then $\rho(A_E) =
\max\{\rho(A_C) : C\text{ is a nontrivial strongly connected component of $E$}\}$, as discussed at
the beginning of \cite[Section~4]{aHLRS3}. Let $H_\beta := \{s(\mu) : \mu \in E^*\text{ and
}r(\mu) \in \bigcup_{\log\rho(A_C) > \beta} C\}$. Theorem~3.1 of \cite{aHLRS} shows that
$|\Ext_{\beta'}(\alpha)| = |E^0|$, and Theorem~5.3 of \cite{aHLRS3} implies that
$|\Ext_{\beta}(\alpha)| \le |E^0 \setminus H_\beta|$. Since $\beta < \log\rho(A_E) =
\max\{\log\rho(A_C) : C \in E^0/{\sim}\}$, we have $H_\beta \not= \emptyset$. Hence $|E^0
\setminus H_\beta| < |E^0|$, which proves the result.
\end{proof}

{\begin{lem}\label{cor:beta property}
The interior in $\RR$ of the set
\begin{equation}\label{eq:half-line}
\big\{\beta \in (0,\infty) : |\Ext_{\beta'}(\alpha)| = |\Ext_{\beta}(\alpha)|\text{ for all }\beta' \ge \beta\big\}
\end{equation}
is the open half-line $\big(\log\rho(A_E), \infty\big)$.
\end{lem}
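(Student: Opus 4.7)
The plan is to extract both containments of the interior from facts already available. For the inclusion $(\log\rho(A_E), \infty) \subseteq $ interior of the set~\eqref{eq:half-line}, I would invoke Theorem~3.1 of \cite{aHLRS}, which was recalled inside the proof of Lemma~\ref{lem:critical beta}: it gives $|\Ext_\beta(\alpha)| = |E^0|$ for every $\beta > \log\rho(A_E)$. Consequently, for any such $\beta$ and every $\beta' \ge \beta$, both $|\Ext_\beta(\alpha)|$ and $|\Ext_{\beta'}(\alpha)|$ equal $|E^0|$, so $\beta$ lies in the set~\eqref{eq:half-line}. Since $(\log\rho(A_E), \infty)$ is open in $\RR$ and contained in that set, it is contained in its interior.

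For the reverse inclusion I would apply Lemma~\ref{lem:critical beta} to rule out any $\beta < \log\rho(A_E)$ from the set altogether. Indeed, for such $\beta$, choosing any $\beta' \in (\log\rho(A_E), \infty)$ gives $|\Ext_\beta(\alpha)| < |\Ext_{\beta'}(\alpha)|$, so the defining condition of \eqref{eq:half-line} fails. Thus the set is contained in $[\log\rho(A_E), \infty) \cap (0, \infty)$, whose interior in $\RR$ is $(\log\rho(A_E), \infty) \cap (0, \infty)$. Combining the two containments yields the claim.

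The main subtlety is distinguishing ``being in the set'' from ``being in the interior of the set'' at the boundary point $\beta = \log\rho(A_E)$. Whether or not this point itself belongs to~\eqref{eq:half-line} depends on the value of $|\Ext_{\log\rho(A_E)}(\alpha)|$ compared to $|E^0|$, but either way it cannot lie in the $\RR$-interior, because any neighbourhood of it contains values strictly less than $\log\rho(A_E)$, which by the previous step are excluded. The degenerate case when $E$ has no cycles, so that $\log\rho(A_E) = -\infty$, is handled by observing that Lemma~\ref{lem:critical beta} is vacuous and the whole of $(0,\infty)$ lies in~\eqref{eq:half-line}, which is consistent with the stated interior once one reads $(\log\rho(A_E), \infty)$ inside the ambient half-line $(0,\infty)$. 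Beyond this bookkeeping, no genuine obstacle arises: the lemma is essentially a repackaging of Lemma~\ref{lem:critical beta} and \cite[Theorem~3.1]{aHLRS} in a form that depends only on the isomorphism class of the pair $(\Tt C^*(E), \alpha)$.
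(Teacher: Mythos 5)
Your proof is correct and follows essentially the same route as the paper's: \cite[Theorem~3.1]{aHLRS} puts $(\log\rho(A_E),\infty)$ inside the set~\eqref{eq:half-line}, and Lemma~\ref{lem:critical beta} excludes every $\beta < \log\rho(A_E)$, which pins down the interior. Your extra bookkeeping about the boundary point $\beta = \log\rho(A_E)$ and the acyclic case $\log\rho(A_E) = -\infty$ is more careful than the paper's two-line argument, but it is refinement of the same idea rather than a different approach.
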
}
\begin{proof}
Theorem~3.1 of \cite{aHLRS} shows that if $\beta > \log\rho(A_E)$, then we have
$|\Ext_{\beta'}(\alpha)| = |\Ext_{\beta}(\alpha)|\text{ for all }\beta' \ge \beta$, and
Lemma~\ref{lem:critical beta} shows that if $\beta < \log\rho(A_E)$, then $|\Ext_{\beta'}(\alpha)|
> |\Ext_{\beta}(\alpha)|\text{ for some }\beta' > \beta$.
\end{proof}

{Throughout the rest of this note, we shall let $\pi : \Tt C^*(E) \to \Bb(\ell^2(E^*))$ be the canonical
(faithful) path-space representation of $\Tt C^*(E)$. We will need to show that
the minimal projections in $\Tt C^*(E)$ corresponding to vertices of $E$ can be recovered using
the gauge action $\gamma^E$. For each $\mu \in E^*$, we define
\begin{equation}\label{oldnotation4}\textstyle
\Delta_\mu := t_\mu \big(q_{s(\mu)} - \sum_{e \in s(\mu) E^1} t_e t^*_e\big) t^*_\mu \in \Tt C^*(E).
\end{equation}
The $\Delta_\mu$ are minimal projections in the canonical copy of $\bigoplus_{v \in E^0}
\Kk(\ell^2(E^* v))$ in $\Tt C^*(E)$; indeed, each $\pi(\Delta_\mu)$ is the rank-1 projection
$\theta_{\delta_\mu, \delta_\mu}$ onto the span of the basis vector $\delta_\mu \in \ell^2(E^*)$.
}

\begin{lem}\label{lem:get vertices}
Let $E$ be a finite directed graph. Let $\alpha$ be the dynamics~\eqref{eq:alpha def}. Let $\beta$
be any real number greater than $\max\{0, \log\rho(A_E)\}$. Let $\phi$ be an extremal KMS$_\beta$
state of $(\Tt C^*(E), \alpha)$. Let $P_{\min}$ denote the collection of minimal projections on
$\Tt C^*(E)$. There is a unique $p_\phi \in P_{\min}$ such that $\phi(p_\phi) = \max\{\phi(q) : q
\in P_{\min}\}$. Moreover, with $\Delta_v$ as in \eqref{oldnotation4}, we have $p_\phi =
\Delta_{v_\phi}$ for some $v_\phi \in E^0$.
\end{lem}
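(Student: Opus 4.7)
The plan is to identify $\phi$ with a specific vertex-indexed state $\phi_{v_\phi}$ via the aHLRS classification, show that every minimal projection of $\Tt C^*(E)$ is a rank-one operator on path space, and then maximise the resulting explicit formula.

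First I invoke \cite[Theorem~3.1]{aHLRS}: since $\beta > \log\rho(A_E)$, the extremal KMS$_\beta$ states are parametrised by $E^0$, so write $\phi = \phi_{v_\phi}$ for some $v_\phi \in E^0$. The explicit description of these states, combined with the KMS-derived identity $\phi(\Delta_\mu) = e^{-\beta|\mu|}\phi(\Delta_{s(\mu)})$, gives $\phi_{v_\phi}(\Delta_\mu) = C_{v_\phi}\, e^{-\beta|\mu|}$ when $s(\mu) = v_\phi$ and $\phi_{v_\phi}(\Delta_\mu) = 0$ otherwise, for some strictly positive normalisation $C_{v_\phi}$. More generally, $\phi_{v_\phi}$ vanishes on the ideal-summand $\Kk(\ell^2(E^* u))$ whenever $u \neq v_\phi$, while its restriction to $\Kk(\ell^2(E^* v_\phi))$ agrees, up to the scalar $C_{v_\phi}$, with the trace weighted by the diagonal weights $e^{-\beta|\mu|}$; the off-diagonal values $\phi_{v_\phi}(\theta_{\delta_\mu,\delta_\nu}) = 0$ for $\mu \neq \nu$ drop out of a short KMS computation.

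Next I identify the minimal projections of $\Tt C^*(E)$ with the rank-one projections in $\bigoplus_{v \in E^0} \Kk(\ell^2(E^* v))$. Faithfulness of $\pi$ makes this ideal essential: any $a \in \Tt C^*(E)$ with $a\Delta_\mu = 0$ for every $\mu \in E^*$ satisfies $\pi(a)\delta_\mu = 0$ for every $\mu$, forcing $a = 0$. For a non-zero projection $p$, essentiality rules out $p \cdot \bigoplus_v \Kk(\ell^2(E^* v)) \cdot p = 0$, so combined with minimality $p\Tt C^*(E) p = \CC p$ we get $p \in \bigoplus_v \Kk(\ell^2(E^* v))$. Thus $p = p_\xi$ is a rank-one projection onto some unit vector $\xi = \sum_{\mu \in E^* u} c_\mu \delta_\mu$ for a unique $u \in E^0$.

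Finally, the weighted-trace description of $\phi_{v_\phi}$ yields
\[
\phi_{v_\phi}(p_\xi) = \delta_{u, v_\phi}\, C_{v_\phi} \sum_{\mu \in E^* v_\phi} e^{-\beta|\mu|}|c_\mu|^2,
\]
which is non-zero only when $u = v_\phi$ and is then a diagonal quadratic form in the coefficients. Since $\beta > 0$, the weight $e^{-\beta|\mu|}$ is strictly less than $1$ on every path of positive length and equals $1$ at the unique length-zero path $\mu = v_\phi$, so the maximum over unit vectors is attained uniquely at $\xi = \delta_{v_\phi}$; hence $p_\phi = \Delta_{v_\phi}$. The main obstacle I anticipate is the second step: establishing cleanly that every minimal projection lies in the compact ideal, for which essentiality of $\bigoplus_v \Kk(\ell^2(E^* v))$---an immediate consequence of faithfulness of the path-space representation---is the decisive input.
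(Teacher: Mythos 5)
Your proof is correct and follows essentially the same route as the paper's: both invoke \cite[Theorem~3.1]{aHLRS} to attach a vertex $v_\phi$ to $\phi$, express $\phi$ as the weighted sum $a \mapsto \sum_{\mu \in E^* v_\phi} e^{-\beta|\mu|}\big(\pi(a)\delta_\mu \mid \delta_\mu\big)\varepsilon^{v_\phi}_{v_\phi}$, reduce the problem to rank-one projections on $\ell^2(E^* v_\phi)$, and maximise the resulting diagonal quadratic form using $\beta>0$. The only structural difference is that you first prove every minimal projection lies in the essential ideal $\bigoplus_v \Kk(\ell^2(E^* v))$ (a correct argument, since $pIp \subseteq p\,\Tt C^*(E)\,p = \CC p$ and essentiality forces $pIp \neq 0$), whereas the paper sidesteps this by splitting into the cases $\phi(q)=0$ and $\phi(q)\neq 0$ and working only with the image of $q$ under the summand representation $\pi_{v_\phi}$.
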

\begin{proof}
For each $v\in E^0$, let $\varepsilon^v_{(\cdot)}$ denote the measure $\big(\sum_{\mu \in E^* v}
e^{-\beta|\mu|}\big)^{-1} \delta_v {(\cdot)}$  on $E^0$. Since $\beta > \log\rho(A_E)$,
\cite[Theorem~3.1]{aHLRS} implies that there is a unique $v_\phi \in E^0$ such that $\phi$
satisfies
\[
\phi(t_\mu t^*_\nu) = \delta_{\mu,\nu} e^{-\beta|\mu|} \varepsilon^{v_\phi}_{s(\mu)},\quad \text{for all $\mu,\nu\in E^*$}.
\]
By the proof of \cite[Theorem~3.1(b)]{aHLRS}, we know that $\phi$ satisfies
\[
\phi(a)
    = \sum_{\mu \in E^*v_\phi} e^{-\beta|\mu|} \big(\pi(a) \delta_\mu | \delta_\mu\big) \varepsilon^{v_\phi}_{s(\mu)}\quad \text{for all $a\in \Tt C^*(E)$}.
\]
We have
\[\textstyle
\phi(\Delta_{v_\phi}) = \varepsilon^{v_\phi}_{v_\phi} = \big(\sum_{\mu \in E^*v_\phi} e^{-\beta|\mu|}\big)^{-1}.
\]
Fix $q \in P_{\min} \setminus \Delta_{v_\phi}$. It suffices to show that $\phi(q) <
\phi(\Delta_{v_\phi})$. Let $\pi_{v_\phi} : \Tt C^*(E) \to \Bb(\ell^2(E^* v_\phi))$ be the direct
summand in $\pi$ corresponding to ${v_\phi}$. Then $\phi$ factors through $\pi_{v_\phi}$. If
$\phi(q) = 0$ then we certainly have $\phi(q) < \phi(\Delta_{v_\phi})$, so suppose that $\phi(q)
\not= 0$. Then $\pi_{v_\phi}(q) \not= 0$, and so $\pi_{v_\phi}(q)$ is a minimal projection in
$\pi_{v_\phi}(\Tt C^*(E))$. Since $\pi_{v_\phi}(\Tt C^*(E))$ contains all of
$\Kk(\ell^2(E^*{v_\phi}))$, it follows that $\pi(q)$ is the rank-one projection $\theta_{\xi,\xi
}$projection corresponding to a unit
vector $\xi \in \ell^2(E^*{v_\phi})$. %Since $\beta > 0$, we have $e^{-\beta} < 1$.
Hence
\begin{align*}
\phi(q) &= \sum_{\mu \in E^*{v_\phi}} e^{-\beta|\mu|} \big(\pi(q) \delta_\mu | \delta_\mu\big) \varepsilon^{v_\phi}_{s(\mu)}
     = \sum_{\mu \in E^*{v_\phi}} e^{-\beta|\mu|} \big(\theta_{\xi,\xi}(\delta_\mu) | \delta_\mu\big) \varepsilon^{v_\phi}_{s(\mu)} \\
    &= \sum_{\mu \in E^*{v_\phi}} e^{-\beta|\mu|} \big(\big(\xi \mid \delta_\mu\big)\xi | \delta_\mu\big) \varepsilon^{v_\phi}_{s(\mu)}.
    \end{align*}
    Since $\beta > 0$, we have $e^{-\beta |\mu |} =1$ when
$\mu = v_\phi$ and $e^{-\beta |\mu | } \leq e^{-\beta} $
when $\mu \neq v_\phi$, and so we deduce that
   \[
   \phi(q)
    \le \Big(|\xi_{v_\phi}|^2 + e^{-\beta} \sum_{\mu \not= {v_\phi}} |\xi_\mu|^2\Big) \varepsilon^{v_\phi}_{s(\mu)}.
   \]

Since $q \not= \Delta_{v_\phi}$, we have $\xi \not= \delta_{v_\phi}$, and so $|\xi_{v_\phi}| < 1$.
We have $\sum |\xi_\mu|^2 = \|\xi\|^2 = 1$, and so $e^{-\beta} \sum_{\mu \not= v_\phi} |\xi_\mu|^2
= e^{-\beta} (1 - |\xi_{v_\phi}|^2) < 1 - |\xi_{v_\phi}|^2$. Hence $\phi(q) <
\varepsilon^v_{s(\mu)} = \phi(\Delta_{v_\phi})$ as claimed.
\end{proof}

Lemma~\ref{lem:get vertices} allows us to recover the projections $\Delta_v$ of $\Tt C^*(E)$ from
$\Tt C^*(E)$ together with its simplex of KMS states. Since the KMS states are intrinsic to the
pair $(\Tt C^*(E), \gamma^E)$, it follows that we can recover the $\Delta_v$ from the Toeplitz
algebra and its gauge action. We  show next how to recover the cardinalities of the sets $E^n v$
as well. We start with some notation.

\begin{ntn}
For each $\mu,\nu \in E^*$ with $s(\mu)=s(\nu)$ we define $\Theta_{\mu,\nu} := t_\mu
\Delta_{s(\mu)} t^*_\nu$. Recall that the path-space representation $\pi$ carries each
$\Theta_{\mu,\nu}$ to the canonical matrix unit $\theta_{\delta_\mu, \delta_\nu}$. Recall also
that for $n \in \ZZ$, the \emph{$n$\textsuperscript{th} spectral subspace} $\Tt C^*(E)_n$ of $\Tt
C^*(E)$ with respect to $\gamma$ is
\[
    \Tt C^*(E)_n := \{a \in \Tt C^*(E) : \gamma^E_z(a) = z^n a\text{ for all }z \in \TT\}.
\]
\end{ntn}

\begin{lem}\label{lem:get edges}
Let $E$ be a finite directed graph. For $n \ge 0$, we have $\Tt C^*(E)_n \Delta_v =
\lsp\{\Theta_{\mu, v} : \mu \in E^n v\}$; in particular, $|E^n v| = \dim(\Tt C^*(E)_n \Delta_v)$.
\end{lem}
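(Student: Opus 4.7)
My plan is to work inside the faithful path-space representation $\pi : \Tt C^*(E) \to \Bb(\ell^2(E^*))$, where $\pi(\Delta_v)$ is the rank-one projection $\theta_{\delta_v, \delta_v}$ and $\pi(\Theta_{\mu, v}) = \theta_{\delta_\mu, \delta_v}$. For any $T \in \Bb(\ell^2(E^*))$ one has $T\theta_{\delta_v, \delta_v} = \theta_{T\delta_v, \delta_v}$, so $\pi(a\Delta_v) = \theta_{\pi(a)\delta_v,\, \delta_v}$ for every $a \in \Tt C^*(E)$. By faithfulness of $\pi$ the claimed equality therefore reduces to the single statement
\[
\{\pi(a)\delta_v : a \in \Tt C^*(E)_n\} = \lsp\{\delta_\mu : \mu \in E^n v\},
\]
after which the dimension count is immediate from the obvious linear independence of $\{\theta_{\delta_\mu, \delta_v} : \mu \in E^n v\}$ in $\Bb(\ell^2(E^*))$.

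The inclusion $\supseteq$ is free: for $\mu \in E^n v$ we have $\gamma^E_z(t_\mu) = z^n t_\mu$, so $t_\mu \in \Tt C^*(E)_n$, and $\pi(t_\mu)\delta_v = \delta_\mu$. For the reverse inclusion I would combine two observations. First, the gauge action is implemented spatially on $\ell^2(E^*)$ by the grading unitaries $U_z\delta_\sigma := z^{|\sigma|}\delta_\sigma$; since $|v| = 0$ we have $U_z\delta_v = \delta_v$, and any $a \in \Tt C^*(E)_n$ therefore satisfies $U_z\pi(a)\delta_v = z^n\pi(a)\delta_v$, placing $\pi(a)\delta_v$ in the finite-dimensional length-$n$ subspace $\lsp\{\delta_\sigma : |\sigma| = n\}$. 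Second, using the dense spanning set $\{t_\mu t_\nu^* : s(\mu) = s(\nu)\}$ for $\Tt C^*(E)$, a short check on length shows that $\pi(t_\nu^*)\delta_v = 0$ as soon as $|\nu| \ge 1$, because the length-zero path $v$ has no edge-prefix. Hence only the $\nu \in E^0$ summands survive, and they give $\pi(t_\mu t_w^*)\delta_v = \delta_\mu$ when $w = v$ and $s(\mu) = v$, and zero otherwise; in particular $\pi(\Tt C^*(E))\delta_v \subseteq \overline{\lsp}\{\delta_\mu : \mu \in E^* v\}$. Intersecting the two constraints yields $\pi(a)\delta_v \in \lsp\{\delta_\mu : \mu \in E^n v\}$, as required.

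The main bookkeeping is the vanishing $\pi(t_\nu^*)\delta_v = 0$ for $|\nu| \ge 1$, which I expect to follow by a one-line induction from the defining formula $\pi(t_e^*)\delta_\sigma = \delta_{\sigma'}$ if $\sigma = e\sigma'$ and $0$ otherwise. Finiteness of $E$ ensures that $\lsp\{\Theta_{\mu, v} : \mu \in E^n v\}$ is already finite-dimensional, so no closure arguments are needed when matching the two sides, and the dimension statement falls out of the matrix-unit picture in $\pi$.
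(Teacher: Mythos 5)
Your proof is correct and follows essentially the same route as the paper: both arguments compute in the faithful path-space representation and rest on the same key observation that $\pi(t_\nu^*)\delta_v=0$ unless $\nu=v$, so that only the terms $t_\mu\Delta_v$ with $\mu\in E^nv$ survive. The only cosmetic difference is that you extract the length-$n$ constraint from the implementing unitaries $U_z$, whereas the paper reads it off directly from the standard description $\Tt C^*(E)_n=\clsp\{t_\mu t_\nu^*: |\mu|-|\nu|=n,\ s(\mu)=s(\nu)\}$.
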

\begin{proof}
It is standard that $\Tt C^*(E)_n = \clsp\{t_\mu t^*_\nu : \mu,\nu \in E^*, |\mu| - |\nu| = n,
s(\mu) = s(\nu)\}$. The path-space representation $\pi$ carries $\Delta_v$ to $\theta_{\delta_v,
\delta_v}$, and carries each $t_\mu t^*_\nu$ to the strong-operator sum $\sum_{\lambda \in
s(\nu)E^*} \theta_{\delta_{\mu\lambda},\delta_{\nu\lambda}}$. The latter is nonzero at $\delta_v$
only if $v = \nu\lambda$ for some $\lambda \in s(\mu)E^*$, which forces $\nu = v = \lambda =
s(\mu)$. So if $a \in \Tt C^*(E)_n$ and $a \Delta_v \not= 0$, then $a \Delta_v \in \lsp\{t_\mu
\Delta_v : \mu \in E^n v\} = \lsp\{\Theta_{\mu,v} : \mu \in E^n v\}$. Since each $\Theta_{\mu, v}
= \Theta_{\mu, v} \Delta_v$, the reverse containment is clear.
\end{proof}

We can now prove the first part of the main theorem.

\begin{proof}[Proof of Theorem~\ref{thm:main}(\ref{it:M})]
{By Lemma~\ref{cor:beta property} we may determine the value of $\log\rho(A_E)$ from the KMS state structure of $\alpha$,} and then choose $\beta > \log\rho(A_E)$. For
$\phi \in \Ext_\beta(\alpha)$, Lemma~\ref{lem:get vertices} yields a unique minimal projection
$p_\phi$ of $\Tt C^*(E)$ such that $\phi(p_\phi) = \max\{\phi(q) : q\text{ is a minimal projection
of }\Tt C^*(E)\}$, and we have $p_\phi = \Delta_{v_\phi}$ for some $v_\phi \in E^0$. We have
$q_{v_\phi} \ge \Delta_{v_\phi}$, and then for $w \not= v_\phi$ in $E^0$ we have $q_w
\Delta_{v_\phi} = q_w q_{v_\phi} \Delta_{v_\phi} = 0$. So there is a unique minimal projection
$P_\phi \in M_E$ that dominates $p_\phi$, namely $P_\phi = q_{v_\phi}$.

For $\phi,\psi \in \Ext_\beta(\alpha)$, let
\[
N(\phi,\psi) := \dim P_\phi \Tt C^*(E)_1 p_\psi.
\]
Let $\widetilde{E}$ be the directed graph with vertices $\Ext_\beta(\alpha)$ and with $|\phi
\widetilde{E}^1\psi| = N(\phi,\psi)$ for all $\phi,\psi \in \Ext_\beta(\alpha)$.
By construction, the graph $\widetilde{E}$ is an isomorphism invariant of the triple $(\Tt C^*(E), \gamma^E, M_E)$.
We claim that $\widetilde{E} \cong E$.

We know from Lemma~\ref{lem:get vertices} that $\phi \mapsto v_\phi$ from $\widetilde{E}^0$ to
$E^0$ is a bijection, so it suffices to show that $N(\phi,\psi) = |v_\phi E^1 v_\psi|$ for all
$\phi,\psi$. Lemma~\ref{lem:get edges} shows that $\Tt C^*(E)_1 p_\psi = \lsp\{\Theta_{e,v_\psi} :
e \in E^1 v_\psi\}$. Since for each $e \in E^1 v_\psi$ we have $\Theta_{e,v_\psi}
\Theta_{e,v_\psi}^* = \Theta_{e,e} \le q_{r(e)}$, it follows that $P_\phi \Tt C^*(E)_1 p_\psi =
\lsp\{\Theta_{e,v_\psi} : e \in v_\phi E^1 v_\psi\}$. Hence
\[
|v_\phi E^1 v_\psi| = \dim P_\phi \Tt C^*(E)_1 p_\psi = N(\phi,\psi).
\]

So $\widetilde{E} \cong E$, as claimed. Applying the process of the preceding three paragraphs to
the system $(\Tt C^*(F), \gamma^F, M_F)$ we obtain a graph $\widetilde{F} \cong F$. Since the
systems $(\Tt C^*(E), \gamma^E, M_E)$ and $(\Tt C^*(F), \gamma^F, M_F)$ are isomorphic, we see
that $\widetilde{E} \cong \widetilde{F}$, and therefore $E \cong F$.
\end{proof}

To prove statement~(\ref{it:kappa}) of Theorem~\ref{thm:main} we first show how to determine which
coordinate of the generalised gauge action $\kappa^E$ corresponds to the minimal projection
$p_\phi$ obtained from $\phi \in \Ext_{\beta}(\alpha)$ as in Lemma~\ref{lem:get vertices}.

\begin{lem}\label{lem:identify vertices}
Let $E$ be a finite directed graph with no sinks, and let $\kappa^E$ and $\alpha$ be as in
Definition~\ref{dfn:kappa} and~\eqref{eq:alpha def}. Fix $\beta > \ln\rho(A_E)$ and let $\phi$ be
an extremal KMS$_\beta$ state of $(\Tt C^*(E), \alpha)$. Let $p_\phi$ be the projection of
Lemma~\ref{lem:get vertices}. Then the vertex $v_\phi$ such that $p_\phi = \Delta_{v_\phi}$ is the
unique vertex such that $\kappa^E_z(a) = z_{v_\phi} a$ for all $a \in \Tt C^*(E)_1 p_\phi$ and
$z\in \TT^{E^0}$.
\end{lem}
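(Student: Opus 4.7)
My plan is to use the explicit description of $\Tt C^*(E)_1 p_\phi$ furnished by Lemma~\ref{lem:get edges} to reduce the claim to a short computation on matrix-unit generators. By Lemma~\ref{lem:get vertices} we have $p_\phi = \Delta_{v_\phi}$, and Lemma~\ref{lem:get edges} then gives
$$
\Tt C^*(E)_1 p_\phi = \lsp\{\Theta_{e, v_\phi} : e \in E^1 v_\phi\},
$$
so it is enough to understand how $\kappa^E$ acts on each $\Theta_{e, v_\phi}$.

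For existence, I would first observe that $\kappa^E_z$ fixes every vertex projection by definition, and that $\kappa^E_z(t_e t_e^*) = z_{s(e)}\overline{z_{s(e)}}\, t_e t_e^* = t_e t_e^*$; hence $\kappa^E_z$ fixes each $\Delta_\mu$, and in particular $\Delta_{v_\phi}$. Since $\Theta_{e, v_\phi} = t_e \Delta_{v_\phi} t_{v_\phi}^* = t_e \Delta_{v_\phi}$ and $s(e) = v_\phi$ for every $e \in E^1 v_\phi$, we obtain
$$
\kappa^E_z(\Theta_{e, v_\phi}) = \kappa^E_z(t_e)\,\kappa^E_z(\Delta_{v_\phi}) = z_{s(e)}\, t_e \Delta_{v_\phi} = z_{v_\phi}\, \Theta_{e, v_\phi}.
$$
By linearity, $\kappa^E_z(a) = z_{v_\phi}\, a$ for every $a \in \Tt C^*(E)_1 p_\phi$ and every $z \in \TT^{E^0}$, so $v_\phi$ has the asserted property.

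For uniqueness I would use the no-sinks hypothesis: it ensures $E^1 v_\phi \neq \emptyset$, so there exists $e \in E^1 v_\phi$ with $\pi(\Theta_{e,v_\phi}) = \theta_{\delta_e,\delta_{v_\phi}} \neq 0$, giving a nonzero element of $\Tt C^*(E)_1 p_\phi$. If some $v \in E^0$ also satisfied $\kappa^E_z(a) = z_v a$ on $\Tt C^*(E)_1 p_\phi$ for every $z$, then comparing with the existence computation applied to $a = \Theta_{e, v_\phi}$ would yield $(z_v - z_{v_\phi})\,\Theta_{e, v_\phi} = 0$ for every $z \in \TT^{E^0}$, forcing $z_v = z_{v_\phi}$ identically on $\TT^{E^0}$ and hence $v = v_\phi$.

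The only real obstacle is this vacuous-case pitfall in the uniqueness argument: without the no-sinks assumption one could have $E^1 v_\phi = \emptyset$, in which case $\Tt C^*(E)_1 p_\phi = \{0\}$ and the eigenvalue condition would hold trivially for every vertex. Once that degenerate case is excluded, the rest is just a direct manipulation of the defining formulas for $\kappa^E$, $\Delta_\mu$, and $\Theta_{\mu,\nu}$.
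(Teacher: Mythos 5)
Your proof is correct and follows essentially the same route as the paper: both use Lemma~\ref{lem:get edges} to identify $\Tt C^*(E)_1 p_\phi$ as $\lsp\{\Theta_{e,v_\phi} : e \in E^1 v_\phi\}$, compute that $\kappa^E_z$ scales each $\Theta_{e,v_\phi}$ by $z_{s(e)} = z_{v_\phi}$, and invoke the no-sinks hypothesis to ensure this space is nonzero so that the eigenvalue determines the vertex. You simply spell out in more detail the computation that the paper compresses into ``it follows from the definition of $\kappa^E$,'' and you correctly flag the vacuous-case pitfall that makes the no-sinks hypothesis necessary.
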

\begin{proof}
For $w \in E^0$, Lemma~\ref{lem:get edges} gives $\Tt C^*(E)_1\Delta_w = \lsp\{\Theta_{e, w} : e
\in E^1 w\} = \lsp\{t_e \Delta_w : e \in E^1\}$, and so it follows from the definition of
$\kappa^E$ that $\kappa^E_z(a) = z_w a$ for all $a \in \Tt C^*(E)_1\Delta_w$ and $z \in
\TT^{E^0}$. Since $E$ has no sinks, each $\lsp\{\Theta_{e, w} : e \in E^1 w\}$ is nontrivial,
which proves uniqueness.
\end{proof}

\begin{proof}[Proof of Theorem~\ref{thm:main}(\ref{it:kappa})]
First observe that the dynamics $\alpha$ of $\Tt C^*(E)$ defined in~\eqref{eq:alpha def} is
determined by $\kappa^E$ via  $\alpha_t = \kappa^E_{(e^{it}, \dots, e^{it})}$. Using
Lemma~\ref{cor:beta property} as in the proof of Theorem~\ref{thm:main}(\ref{it:M}), fix $\beta >
\ln\rho(A_E)$. For each extremal KMS$_\beta$ state $\phi \in \Ext_\beta(\alpha)$,
Lemma~\ref{lem:get vertices} yields a unique minimal projection $p_\phi$ of $\Tt C^*(E)$ such that
\[\phi(p_\phi) = \max\{\phi(q) : q\text{ is a minimal projection of } \Tt C^*(E)\}.\]
Lemma~\ref{lem:identify vertices} shows that $p_\phi = \Delta_{v_\phi}$ where $v_\phi \in E^0$ is
the unique vertex such that $\kappa^E_z(a) = z_{v_\phi} a$ for all $a \in \Tt C^*(E)_1 p_\phi$.

Suppose that $\phi, \psi \in \Ext_\beta(\alpha)$ are distinct. For $z \in \TT$ let
$\omega(\phi,\psi,z) \in \TT^{E^0}$ be the element such that
\[
\omega(\phi,\psi,z)_u = \begin{cases}
    \overline{z} &\text{ if $u = v_\phi$}\\
    z &\text{ if $u = v_\psi$}\\
    1 &\text{ otherwise.}
    \end{cases}
\]
Define an action $\gamma^{\phi,\psi} : \TT \to \Aut(\Tt C^*(E))$ by $\gamma^{\phi,\psi}_z =
\kappa^E_{\omega(\phi,\psi,z)}$. Note that this action fixes the partial isometry $t_{ef}$
associated to $ef \in E^2 v_\psi$ if and only if $r(f) = s(e) = v_\phi$. Combining the fixed point
algebra $\Tt C^*(E)^{\gamma^{\phi,\psi}}$ of $\gamma^{\phi,\psi}$ with the second spectral
subspace of the gauge action $\gamma^E$, we define
\[
N(\phi,\psi) :=
        \dim\big(\Tt C^*(E)^{\gamma^{\phi,\psi}} \cap \Tt C^*(E)_2 p_\psi\big) / \dim(\Tt C^*(E)_1 p_\phi).
\]
We extend the definition of $N$ to the case $\phi = \psi \in \Ext_\beta(\alpha)$ by setting
\[
 N(\psi, \psi) := \dim(\Tt C^*(E)_1 p_\psi) - \sum_{\phi \not= \psi} N(\phi,\psi).
\]

We claim that $N(\phi,\psi)\in \NN$ for all $\phi,\psi \in \Ext_\beta(\alpha)$, and that $E$ is
isomorphic to the directed graph $\widetilde{E}$ with vertices $\widetilde{E}^0 :=
\Ext_\beta(\alpha)$, and such that $|\phi \widetilde{E}^1 \psi| = N(\phi, \psi)$ for all
$\phi,\psi \in \Ext_\beta(\alpha)$.
Since we already have a bijection $\phi \mapsto v_\phi$ from $\widetilde{E}^0$ to $E^0$, to prove
the claim, we just have to show that $N(\phi,\psi) = |v_\phi E^1 v_\psi|$ for all $\phi,\psi$.

For this, fix $\phi,\psi \in \Ext_\beta(\alpha)$ and let $ ef \in E^2$. Then
\[
\gamma^{\phi,\psi}_z(t_{ef} p_\psi) = \begin{cases}
    t_{ef} p_\psi &\text{ if $f \in v_\phi E^1 v_\psi$}\\
    z^2 t_{ef} p_\psi &\text{ if $f \in v_\psi E^1 v_\psi$}\\
    zt_{ef} p_\psi &\text{ if $f \in E^1v_\psi \setminus (v_\phi E^1 v_\psi  \cup v_\psi E^1 v_\psi)$}\\
    0 &\text{ if $f \not\in E^1 v_\psi$.}
    \end{cases}
\]
So Lemma~\ref{lem:get edges} implies that $\Tt C^*(E)^{\gamma^{\phi,\psi}} \cap \Tt C^*(E)_2
p_\psi = \lsp\{\Theta_{ef, v_\psi} : ef \in E^1 v_\phi E^1 v_\psi\}$. Hence,  $|E^1 v_\phi| \cdot
|v_\phi E^1 v_\psi| = |E^1 v_\phi E^1 v_\psi| = \dim(\Tt C^*(E)^{\gamma^{\phi,\psi}} \cap \Tt
C^*(E)_2 p_\psi)$. By Lemma~\ref{lem:get edges}, we have $|E^1 v_\phi| = \dim(\Tt C^*(E)_1
p_\phi)$. Since, by hypothesis, $E$ has no sinks, we have $|E^1 v_\phi| \not= 0$, and so we deduce
that
\[
|v_\phi E^1 v_\psi| = \dim\big(\Tt C^*(E)^{\gamma^{\phi,\psi}} \cap \Tt C^*(E)_2 p_\psi\big) / \dim(\Tt C^*(E)_1 p_\phi)
    = N(\phi, \psi).
\]
Now for each $\psi \in \Ext_\beta(\alpha)$, we see that
\begin{align*}
|v_\psi E^1 v_\psi| &= |E^1 v_\psi| - \sum_{\phi\not= \psi} |v_\phi E^1 v_\psi|\\
    &= \dim(\Tt C^*(E)_1 p_\psi) - \sum_{\phi \not= \psi} \dim\big(\Tt C^*(E)^{\gamma^{\phi,\psi}} \cap \Tt C^*(E)_2 p_\psi\big) / \dim(\Tt C^*(E)_1 p_\phi)\\
    &= N(\psi, \psi).
    \end{align*}
{This shows that $E\cong \widetilde E$ and concludes the proof of the claim.}

{
To finish the proof of the ``only if" assertion in
Theorem~\ref{thm:main}(\ref{it:kappa}) assume now there exist
an isomorphism $\rho: \Tt C^*(E) \to \Tt C^*(F)$ and a bijection $\varphi:E^0 \to F^0$
intertwining the generalised gauge actions $\kappa^E$ and $\kappa^F$. Then
$\varphi^*: \TT^{E^0} \to \TT^{F^0}$ maps constant functions to constant functions,
that is, $\varphi^*$ respects the diagonal  embeddings of $\TT$.
Hence $\rho$ intertwines the gauge actions $\gamma^E$ and $\gamma^F$, and also the dynamics $\alpha^E$ and $\alpha^F$ obtained from them on setting $z = e^{it}$.
Passing to extremal KMS$_\beta$ states,  we get a bijection $\widetilde{E}^0 :=\Ext_\beta(\alpha^E) \cong
\Ext_\beta(\alpha^F) =: \widetilde{F}^0$ in which $\phi \mapsto \phi' := \phi\circ \rho\inv$. The isomorphism $\rho$ also intertwines the action $\gamma^{\phi,\psi} : \TT \to \Aut(\Tt C^*(E))$ with the action $\gamma^{\phi',\psi'} : \TT \to \Aut(\Tt C^*(F))$  and thus $N(\phi,\psi) = N(\phi',\psi')$. Thus, much like
in the final paragraph of the proof of the ``only if" assertion in Theorem~\ref{thm:main}(\ref{it:M}), we conclude that $\widetilde E \cong \widetilde F$ and hence that $E\cong F$.}%\footnote{\tcr{I can't avoid the feeling that we are working too hard here. The reason is that the very assumption already gives us the map on vertices in the isomorphisms $\widetilde E \to \widetilde F$  (I strongly believe it is $\psi \mapsto v_\psi \mapsto \varphi(v_\psi) \mapsto  \psi'$)  and we only need to verify that the $N(\phi,\psi)$ match. But we are constructing the whole thing from scratch as in part (1), where we relied on the $M_E$ to do the vertex job. Someone more aware than me should also probably verify that I did not give too much more detail here than was omitted in the proof of part (1)}}
\end{proof}

\begin{exm}\label{eg:sinks}
As compared to statement~(\ref{it:M}), statement~(\ref{it:kappa}) of our main theorem has the
additional hypothesis that $E$ and $F$ have no sinks. Here we present an example---first shown to
the fourth author in the context of Cohn path algebras by Gene Abrams, and then independently by
S{\o}ren Eilers---that shows that the additional hypothesis in statement~(\ref{it:kappa}) is
necessary. Consider the graphs
\[
\tikzset{->-/.default=0.5, ->-/.style={decoration={
  markings,
  mark=at position #1 with {\arrow{stealth}}},postaction={decorate}}}
\begin{tikzpicture}[scale=2]
    \node at (0,0) {$E$};
    \node[circle, inner sep=1pt, fill=black] (u) at (0.5,0) {};
    \node[circle, inner sep=1pt, fill=black] (v) at (0.5,0.3) {};
    \node[circle, inner sep=1pt, fill=black] (w) at (1.5,0) {};
    \node[inner sep=1pt, anchor=east] at (u.west) {$u$};
    \node[inner sep=1pt, anchor=east] at (v.west) {$v$};
    \node[inner sep=1pt, anchor=west] at (w.east) {$w$};
    \draw[->-, out=210, in=330] (w) to node [pos=0.6, anchor=north, inner sep=1pt] {$e$} (u);
    \draw[->-, out=150, in=30] (w) to node [pos=0.6, anchor=south, inner sep=1pt] {$f$} (u);
    \node at (3,0) {$F$};
    \node[circle, inner sep=1pt, fill=black] (u2) at (3.5,0) {};
    \node[circle, inner sep=1pt, fill=black] (v2) at (3.5,0.3) {};
    \node[circle, inner sep=1pt, fill=black] (w2) at (4.5,0) {};
    \node[inner sep=1pt, anchor=east] at (u2.west) {$u$};
    \node[inner sep=1pt, anchor=east] at (v2.west) {$v$};
    \node[inner sep=1pt, anchor=west] at (w2.east) {$w$};
    \draw[->-] (w2) to node [pos=0.6, anchor=south west, inner sep=1pt] {$e$} (v2);
    \draw[->-] (w2) to node [pos=0.6, anchor=north, inner sep=1pt] {$f$} (u2);
\end{tikzpicture}
\]
There is an isomorphism $\Tt C^*(E) \to \Tt C^*(F)$ that carries $q_v$ to $q_v - t_e t^*_e$,
carries $q_u$ to $q_u + t_e t^*_e$ and takes each of the remaining generators of $\Tt C^*(E)$ to
the generator of $\Tt C^*(F)$ with the same label. This isomorphism intertwines $\kappa^E$ and
$\kappa^F$ because in both graphs every edge has source $w$. It does not, however, carry $M_E$ to
$M_F$ since, for example, $q_v - t_e t^*_e \not\in M_F$.
\end{exm}

\end{document}